\newtheorem{theorem}{Theorem}
\newtheorem{corollary}[theorem]{Corollary}
\newtheorem{definition}[theorem]{Definition}
\newtheorem{example}[theorem]{Example}
\newtheorem{proposition}[theorem]{Proposition}
\newtheorem{remark}[theorem]{Remark}
\author{A. Boudjaj*} \author{ Y. Rami}
\address{D\'epartement de Math\'ematiques \& Informatique,\\
Universit\'e  My Ismail, B. P. 11 201 Zitoune, Mekn\`es, Morocco,}
\email{azze89ddine@gmail.com  and y.rami@fs-umi.ac.ma}
\title{On spaces of topological complexity two}
\date{\today}
\subjclass{Primary 55P62; Secondary 55M30 }
\keywords{Topological complexity, Sectional category,  cup length.\\
{*.This work is a part of the thesis being prepared by the first author.}}
\begin{document}
\maketitle


\selectlanguage{english}
\begin{abstract}
In this paper we consider the classification of minimal cellular structures of  spaces of topological complexity two under some hypotheses on  there graded cohomological algebra. This continues the method used by M.Grant et al. in [1].
\end{abstract}
\section{Introduction \label{intro}}
The {\it topological complexity} (TC) of a space,  introduced by M. Farber, is a numerical homotopy invariant  analogous to the 
 Lusternik-Schnirelmann category (cat) of that space. The two are expressed in terms of sectional category of a specific fibration (see \S \ref{prely}). For instance, $cat(\mathbb{S}^n)$   equals to one either $n$ is odd or even, but $TC(\mathbb{S}^n)$ equals one if $n$ is odd and two if it is even. The goal of this paper is to investigate spaces with TC equals two, continuing the process begun by M. Grant et al. in  [1].\\

Denote by $X$  a non acyclic path-connected and finite type CW-complex (see below). According to Propositions 2.1 and Theorem 3.4.(A) in [1], and by the theorems of Hurewicz Whitehead :\\
  If $X$ is simply-connected and  $dim\widetilde{H}^*(X,\mathbb{K}) =  1$ for all choices of field $\mathbb{K}$, then $TC(X) = 2$ if and only if $X$ is of homotopy type of a sphere with even dimension. \\
  
In the sequel we will assume in addition that $X$ is a non integral homology sphere. 
If $X$is simply-connected and not integral homology sphere. Denote $H_r(X)$, a non zero homology group of $X$, which occurs for an integer $r\geq 1$. Hypothesis implies that $H_r(X)\cong \mathbb{Z}^n \oplus T(H_r(X))$ ($T(H_r(X))$, being the torsion subgroup of $H_r(X)$), with the condition: if $n=1$, we  must have  $T(H_r(X))\not = 0$ or else, there exist another integer $s>r$ such that $H_s(X)\not = 0$. By the universal coefficient theorem, $H^r(X, \mathbb{Z})\cong Hom(H_r(X),\mathbb{Z})\not = 0$. We still use the identification $H_r(X) = \mathbb{Z}^n \oplus T(H_r(X))$.\\

 We call an element  $u$ a \textit{$t$-specific generator}   if it is  the unique generator contained in the free part of  $H^t(X, \mathbb{Z})$, the non zero cohomology group with coefficients in $\mathbb{Z}$.\\
 
 Our first result states that the condition $TC(X) = 2$ influences algebra's structure of $H^*(X,\mathbb{K})$,   for any field $\mathbb{K}$.  Indeed, suppose $a\in H^r(X, \mathbb{Z})$ is a  $r$-specific generator. Two cases are in the list:
\begin{enumerate}
\item[i)] Suppose that $a^2\not = 0$, then, for any field $\mathbb{K}$, $\dim \tilde{H}^*(X, \mathbb{K}) = 2$.
\item[ii)] Suppose that  there is an integer $s > r$  and a $s$-specific generator $b\in H^s(X, \mathbb{Z})$  such that the cup product   $ ab \neq 0 $ , then,   for any field $\mathbb{K}$,  $\dim\tilde{H}^*(X,\mathbb{K}) = 3$.

\end{enumerate}

In the second step of our investigation of spaces having  Topological Complexity equal  two, we focus on the determination of the  cells contained in CW-structure of $X$ for each one of the two cases. To this end, we explicit the homology structure $H_*(X, \mathbb{Z})$.
 assuming (for any field $\mathbb{K}$) first, $dim\tilde{H}^*(X, \mathbb{K}) = 2$ (cf. Theorem 1) and second, $dim\tilde{H}^*(X,\mathbb{K}) = 3$ (cf. Theorem 2).

\begin{theorem}{(corollaries 7, 8)}.
Suppose that $TC(X)  = 2$ and  there exist a $r$-specific generator $a \in H^r(X, \mathbb{Z})$ such that $a^2 \neq 0$  then $\dim \widetilde{H}^* (X, \mathbb{K}) = 2$ for all choices of field  $\mathbb{K}$.
  In addition, $X$ is of  homotopy type of a space which contains only cells of dimensions $r$ and   $2r$.

\end{theorem}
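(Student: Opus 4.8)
The plan is to derive the cohomological statement from part~(i) of our first result and then to upgrade it, via the universal coefficient theorem and a standard minimal-cell construction, to the asserted homotopy type. For the first half, the point is that $TC(X)=2$ forces $\mathrm{zcl}_{\mathbb K}(X)\le 2$ for every field $\mathbb K$, the zero-divisor cup length being a lower bound for $TC$; feeding the $r$-specific generator $a$ with $a^2\ne 0$, equivalently the zero-divisor $\bar a=a\otimes 1-1\otimes a$, into this bound — exactly as in the proof of part~(i) — gives $\dim\widetilde H^*(X,\mathbb K)=2$ for every $\mathbb K$, which I would simply invoke.

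Next I would reconstruct $H_*(X,\mathbb Z)$. Writing $H_n(X,\mathbb Z)=\mathbb Z^{b_n}\oplus T_n$ with $T_n$ finite, and letting $s_n^{(p)}$ count the cyclic $p$-primary summands of $T_n$, the universal coefficient theorem gives $\dim_{\mathbb Q}\widetilde H^*(X,\mathbb Q)=\sum_{n\ge 1}b_n$ and $\dim_{\mathbb F_p}\widetilde H^*(X,\mathbb F_p)=\sum_{n\ge 1}b_n+2\sum_{n\ge 1}s_n^{(p)}$. Since the previous step makes both of these equal to $2$, we get $s_n^{(p)}=0$ for all $n$ and all primes $p$, so $H_*(X,\mathbb Z)$ is torsion free with $\sum_{n\ge 1}b_n=2$. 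As $a$ generates the free part of $H^r(X,\mathbb Z)$ we have $b_r\ge 1$, and as $0\ne a^2\in H^{2r}(X,\mathbb Z)$ with $H^*(X,\mathbb Z)$ torsion free we have $b_{2r}\ge 1$; since the total reduced rank is $2$, this forces $H_n(X,\mathbb Z)\cong\mathbb Z$ for $n\in\{0,r,2r\}$ and $H_n(X,\mathbb Z)=0$ otherwise. In particular $\widetilde H_i(X,\mathbb Z)=0$ for $i<r$, so, $X$ being simply connected, Hurewicz shows that $X$ is $(r-1)$-connected with $\pi_r(X)\cong\mathbb Z$.

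Finally I would pass to a cellular model: a simply-connected CW complex with free integral homology admits a minimal CW structure carrying exactly $\mathrm{rank}\,H_n(X,\mathbb Z)$ cells in dimension $n$, built skeleton by skeleton, the absence of torsion being precisely what guarantees that no auxiliary cells are needed to cancel boundaries. Here that structure has a single $r$-cell and a single $2r$-cell besides the base point, so $X$ has the homotopy type of a complex $S^r\cup_\varphi e^{2r}$, which is the second assertion. I expect the main obstacle to be the ingredient cited in the first paragraph — extracting $\dim\widetilde H^*(X,\mathbb K)=2$ over every field from $TC(X)=2$ and $a^2\ne 0$, which is where the topological-complexity hypothesis genuinely enters — whereas the universal-coefficient bookkeeping and the minimal-model step are classical, the latter requiring only some care to see that torsion-freeness is exactly what is being used.
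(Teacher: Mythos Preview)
Your argument is correct and follows the same route as the paper: both deduce $\dim\widetilde H^*(X,\mathbb K)=2$ from the zero-divisor cup-length bound (Proposition~\ref{nilker}) applied to $\bar a$ and a hypothetical extra class, then use the universal coefficient theorem over $\mathbb Q$ and $\mathbb F_p$ to pin down $H_*(X,\mathbb Z)$, and finally invoke the minimal CW model for simply-connected spaces with free homology. The only cosmetic difference is that you package the UCT step as a single dimension count $\dim_{\mathbb F_p}\widetilde H^*=\sum b_n+2\sum s_n^{(p)}$, whereas the paper eliminates degrees and torsion summands one at a time.
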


In the second case
\begin{theorem}{(corollaries 9, 11)}.
  Suppose that $TC(X) = 2$ and there exist a $r$-specific generator $a \in H^r(X, \mathbb{Z})$, and a $s$-specific generator $b \in H^s(X, \mathbb{Z})$ 
  such that  
$a b \neq 0$;  then  $\dim \widetilde{H}^* (X, \mathbb{K}) = 3$ for all choices of field $\mathbb{K}$. 
In addition,  $X$ is of homotopy type of 
a CW-complex which contain  only cells of dimensions  $r$, $s$ and  $(r+s)$.
\end{theorem}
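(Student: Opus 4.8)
The plan is to deduce the integral homology of $X$ from the hypothesis, and then to invoke the existence of a minimal cellular model; the cup product $ab\ne 0$ together with the uniform value $\dim\widetilde H^*(X,\mathbb K)=3$ will carry the argument. The first assertion — that $\dim\widetilde H^*(X,\mathbb K)=3$ for every field $\mathbb K$ — is exactly case (ii) recalled in the introduction (Corollary 9), so I would simply quote it; what remains is the description of the cells, which I would split into two steps.

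First I would pin down $H_*(X,\mathbb Z)$. Taking $\mathbb K=\mathbb Q$ gives $\sum_{n\ge 1}\operatorname{rk}H_n(X,\mathbb Z)=3$. For a prime $p$, writing $t_{n,p}$ for the number of generators of the $p$-primary torsion of $H_n(X,\mathbb Z)$, the universal coefficient theorem yields
\[
\dim\widetilde H^*(X,\mathbb F_p)=\sum_{n\ge 1}\operatorname{rk}H_n(X,\mathbb Z)+2\sum_{n\ge 1}t_{n,p}=3+2\sum_{n\ge 1}t_{n,p}.
\]
Since the left-hand side is again $3$, all the $t_{n,p}$ vanish, so $H_*(X,\mathbb Z)$, and hence $H^*(X,\mathbb Z)$, is torsion free. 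Then the $\mathbb Q$-reductions $a_{\mathbb Q},b_{\mathbb Q}$ of the specific generators are non zero, and $a_{\mathbb Q}b_{\mathbb Q}=(ab)_{\mathbb Q}\ne 0$ because $ab\ne 0$ lies in the torsion free group $H^{r+s}(X,\mathbb Z)$; as they sit in the pairwise distinct degrees $r<s<r+s$ they are linearly independent, hence form a basis of $\widetilde H^*(X,\mathbb Q)$. Combining with torsion freeness, $H_n(X,\mathbb Z)\cong\mathbb Z$ for $n\in\{0,r,s,r+s\}$ and vanishes otherwise (and $r\ge 2$, since $X$ is simply connected).

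Second, since $X$ is simply connected, of finite type, and has free integral homology, I would appeal to the classical fact that $X$ is then homotopy equivalent to a CW-complex with exactly $\operatorname{rk}H_n(X,\mathbb Z)$ cells in dimension $n$ for every $n$ (a minimal CW-structure, obtainable for instance from a homology decomposition by attaching one cell per free generator of $H_n$). By the first step this model has a single $0$-cell and exactly one cell in each of the dimensions $r$, $s$ and $r+s$, and no others, which is the assertion.

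The main obstacle is the first step: one must rule out any torsion in $H_*(X,\mathbb Z)$ — the point being that $p$-torsion would make $\dim\widetilde H^*(X,\mathbb F_p)$ strictly exceed $\dim\widetilde H^*(X,\mathbb Q)$, contradicting the common value $3$ — and one must make sure the product $ab$ does not become zero over $\mathbb Q$, which is precisely where torsion freeness is used again. Once $H_*(X,\mathbb Z)$ is determined, the cellular statement is a formal consequence of the existence of minimal CW-structures for simply connected spaces.
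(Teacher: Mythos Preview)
Your argument is correct and uses the same two ingredients as the paper: the universal coefficient theorem with coefficients $\mathbb Q$ and $\mathbb F_p$ to determine $H_*(X;\mathbb Z)$, followed by passage to a minimal cell structure. The organisation differs. For the first assertion the paper gives a direct zero--divisors argument (Theorem~\ref{thm fon}(2) with $n=2$: an extra independent class would produce a nonzero $(n+1)$--fold product in $\ker U_2(X)$), whereas you simply quote Corollary~9; this is legitimate since the statement is explicitly packaged as Corollaries~9 and~11. For the homology computation the paper argues degree by degree---first excluding $H_t(X)\ne 0$ for $t\notin\{r,s,r+s\}$, then showing each surviving group is exactly $\mathbb Z$, each step via a free-part/torsion-part case split---while you use the single global identity
\[
\dim_{\mathbb F_p}\widetilde H^*(X;\mathbb F_p)=\sum_{n\ge 1}\operatorname{rk}H_n(X;\mathbb Z)+2\sum_{n\ge 1} t_{n,p}
\]
to kill all $p$-torsion at once and then read the ranks off the rational calculation. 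Your route is shorter and avoids the iterated case analysis; the paper's route makes the contribution of each degree explicit and is written so as to cover general $n$ in Theorem~\ref{thm fon}.
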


\section{Definition and Preliminary results \label{prely}}

The sectional category of a fibration $p : E \longrightarrow B$, denoted by $secat(p)$, is
the smallest number $n$ for which there is an open covering $\{U_0, . . . , U_n \}$ of $B$ by
 $(n + 1)$ open sets, for each of which there is a local section $s_i : U_i \longrightarrow E$ of $p$, so that $p \circ s_i = j_i : U_i \longrightarrow B$, where $j_i$ denotes the inclusion.\\
 Let $X^I$, where $I = [0, 1]$ denote the space of (free) paths on a  space $X$. there is a fibration, substitute from the diagonal map $\Delta : X \longrightarrow X\times X$, $\pi : X^I \longrightarrow X \times X$, which evaluates a continuous path at initial and final point, i.e. for $\alpha \in X^I$, we have $\pi (\alpha) = (\alpha(0), \alpha(1))$.
 
 We define the topological complexity (normalized)  of  $X$, noted  $TC(X)$, to be the sectional category $secat(\pi)$
 of this fibration. That is, $TC(X)$ is the smallest number $n$ for which there is an open cover $\{ U_0, . . ., U_n \}$ of $X\times X$ by $(n+ 1)$ open sets, for each of which there is a local section $s_i : U_i \longrightarrow X^I$  of $\pi$, i.e., for which $\pi \circ s_i = j_i
  : U_i \longrightarrow X\times X$, where $j_i
 $ denotes the inclusion. For instance, 
 $TC(S^n)$ equal 1 for $n$ odd and 2 for n even, e.g. $TC(T^1) = 2$ where $T^2 = S^1 \times S^1$, is the tore.\\

 

 Recall also that  $X$ is of finite type if $H_r(X)$ is a finite integral homology group for any integer $r$. 

 

\begin{definition}
Let $\mathbb{K}$ be a field. the homomorphism induced on cohomology with coefficients in $\mathbb{K}$ by the diagonal map $\Delta : X \longrightarrow X\times  X$ (and thus by  $P_2 : PX  \longrightarrow X\times X$  which is a  fibration substitute for it) may be identified with the \textit{cup} product homomorphism 
$$ U_2(X) : H^* (X, \mathbb{K}) \otimes H^* (X, \mathbb{K}) \longrightarrow H^* (X, \mathbb{K}) $$
The ideal of zero divisors is $\ker U_2(X)$,  the kernel of $U_2(X)$, the \textit{zero-divisors cup length} is  $nil(\ker U_2(X))$, the nilpotency of this ideal. which is to
say the number of factors in the longest non-trivial product of elements from this
ideal.
\end{definition}

We  denote $U_2(X)(a \otimes b)$ and $a\smile b$ by $ab$ (taking into consideration the system  of coefficients).

\begin{proposition}{\label{nilker}}([4, Thm 7])
For any field $\mathbb{K}$,  we have  $$nil(\ker U_2(X)) \leq TC(X).$$
\end{proposition}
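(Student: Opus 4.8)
The plan is to deduce this from the standard cohomological lower bound for sectional category, applied to the free path fibration $\pi : X^I \longrightarrow X\times X$. As recalled in the Definition above, the map induced by $\pi$ on $H^*(-;\mathbb{K})$ is identified with $U_2(X)$ (via $\Delta^* = \pi^*$), so $\ker U_2(X) = \ker \pi^*$ and it suffices to prove $nil(\ker \pi^*) \le secat(\pi) = TC(X)$. Set $n = TC(X)$ and fix an open cover $\{U_0,\dots,U_n\}$ of $X\times X$ together with local sections $s_i : U_i \longrightarrow X^I$ satisfying $\pi\circ s_i = j_i$, the inclusion.

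First I would observe that every $x\in\ker\pi^*$ restricts to zero on each $U_i$: from $\pi\circ s_i = j_i$ we get $j_i^* = s_i^*\circ\pi^*$, hence $j_i^*(x) = s_i^*(\pi^*x) = 0$. By exactness of the long exact sequence of the pair $(X\times X, U_i)$, this means $x$ lifts to a class $\bar x_i \in H^*(X\times X, U_i;\mathbb{K})$. Now take any $x_0,x_1,\dots,x_n\in\ker\pi^*$ and corresponding lifts $\bar x_0,\dots,\bar x_n$. Using the relative cup product
$$H^*(X\times X, U_0;\mathbb{K})\otimes\cdots\otimes H^*(X\times X, U_n;\mathbb{K})\longrightarrow H^*\bigl(X\times X,\, U_0\cup\cdots\cup U_n;\mathbb{K}\bigr),$$
the product $\bar x_0\smile\cdots\smile\bar x_n$ lies in $H^*(X\times X, X\times X;\mathbb{K}) = 0$, since the $U_i$ cover $X\times X$. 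By naturality of cup products with respect to the maps of pairs $(X\times X,\varnothing)\to(X\times X,U_i)$, the image $x_0\smile\cdots\smile x_n$ in $H^*(X\times X;\mathbb{K})$ is therefore $0$. Thus no nonzero product of $n+1$ elements of $\ker U_2(X)$ exists, i.e. $nil(\ker U_2(X))\le n = TC(X)$.

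The only delicate point — and the one I expect to be the main obstacle to state cleanly — is the well-definedness and naturality of the relative cup product $H^p(Y,A)\otimes H^q(Y,B)\to H^{p+q}(Y,A\cup B)$, which requires $A$ and $B$ to be open (so that the relevant excision applies); this is precisely why $secat$ is defined through open covers. Granting that standard fact, the rest of the argument is formal. I would also take care to match conventions: with the normalized $secat$ used here ($n+1$ sets when $secat = n$), vanishing of all length-$(n+1)$ products yields exactly $nil(\ker U_2(X))\le TC(X)$, with no off-by-one discrepancy.
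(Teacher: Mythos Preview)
Your argument is correct and is precisely the standard proof of Farber's cohomological lower bound for $TC$ (equivalently, the cup-length lower bound for sectional category applied to the path fibration). However, there is nothing to compare against here: the paper does not supply its own proof of this proposition, it merely records the statement and cites Farber [4, Thm~7]. So your write-up goes beyond what the paper does; if you want to match the paper, a one-line reference to [4, Thm~7] suffices.
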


The cup product can be defined for more general coefficient groups. For example, the \textit{Alexander-Whitney} diagonal approximation gives a cup product $$ \smile : H^p(X; G_1) \otimes H^q(X; G_2) \longrightarrow H^{p + q}(X; G_1 \otimes G_2),$$
by putting $(a \smile b)(\sigma) = a(\sigma/[v_0, \dots, v_p])\otimes b(\sigma/[v_p, \dots, v_{p+q}]) $. In particular, since  $\mathbb{Z} \otimes \mathbb{Z}/k\mathbb{Z} \cong \mathbb{Z}/k\mathbb{Z}$, there is the \textit{cup product} $H^p(X; \mathbb{Z}) \otimes H^q(X; \mathbb{Z}/k\mathbb{Z}) \longrightarrow H^{p + q}(X; \mathbb{Z}/k\mathbb{Z})$ and it coincides with the product obtained by first reducing mod $k$ and then taking the cup 
product over the ring $\mathbb{Z}/k\mathbb{Z}$. This also the case for any field $K$.
We will denote it by: $$ \smile_1 : H^p(X; \mathbb{Z}) \otimes H^q(X; K) \longrightarrow H^{p + q}(X; K)$$

\section{Proofs of our results \label{proofs}}


Let $X$ be a path connected CW-complex  not acyclic and nor homology sphere. Let $r$, denotes, the smallest integer such that $H^r(X,\mathbb{Z})$ is not trivial. \\

\begin{theorem}\label{thm fon}
Let $n$ be an integer such that $n \geq 2$ so :
\begin{enumerate}

\item If $TC(X) = n$ and   there exist a $r$-specific generator $a \in H^r(X, \mathbb{Z})$ such that  $a^n \neq 0$,   then, $\dim \widetilde{H}^* (X, \mathbb{K}) = n$ for all choices of field  $\mathbb{K}$. Furthermore, $X$  is of homotopy type of a CW-complex  which contains only cells of dimension $r$, $2r$, \dots and $nr$.
\item If  $TC(X) = n$ and suppose that  there exist  $\{a_i / 1\leq i\leq n\}$  a finite set of $r_i$-specific generators (with $r_1 < r_2 < \dots < r_n$)
 such that     $a_1 a_2 \dots a_n \neq 0$,  then  $\dim \widetilde{H}^* (X, \mathbb{K}) = n+1$ for all choices of field $\mathbb{K}$. Furthermore,  $X$  is of homotopy type of a CW-complex which  contains only cells of dimensions  $r_1$, $r_2$, \dots, $r_n$ and $r_1 + r_2 + \dots + r_n$.
 
\end{enumerate}

\end{theorem}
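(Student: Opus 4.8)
The plan is to split the argument into the two parts and, within each, first pin down the cohomology dimension via Farber's cup-length estimate and the universal coefficient theorem, then bootstrap this to a statement about the minimal CW-structure by a Morse-theoretic / cell-attachment argument in the spirit of Grant et al.

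\medskip

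\emph{Part (1): the cohomological dimension.} Assume $TC(X)=n$ and that a $r$-specific generator $a\in H^r(X,\mathbb Z)$ satisfies $a^n\neq0$. First I would reduce to field coefficients: if $a$ has infinite order then for $\mathbb K=\mathbb Q$ (or any field of characteristic $0$) the reduction $\bar a$ of $a$ is nonzero and $\bar a^n\neq0$; if one needs characteristic $p$, use the product formula for $\smile_1$ recorded just before this section to transport the integral relation $a^n\neq 0$ to $\mathbb K$. Now in $H^*(X,\mathbb K)$ consider the zero-divisor $\bar a\otimes1-1\otimes\bar a\in\ker U_2(X)$; its $(n)$-fold power expands (binomially, using commutativity up to the usual sign, which is harmless as we only need non-vanishing) into a sum in which the "middle" terms are multiples of $\bar a^i\otimes\bar a^{n-i}$, and because $r$ is the \emph{lowest} nonzero degree and $a$ is the unique free generator in degree $r$, there is no cancellation against other classes; hence $(\bar a\otimes1-1\otimes\bar a)^{n}\neq0$, so $nil(\ker U_2(X))\ge n$, which by Proposition \ref{nilker} forces $TC(X)\ge n$ — consistent — but more importantly it forces $\bar a^i\neq0$ for $0\le i\le n$, so $\dim\widetilde H^*(X,\mathbb K)\ge n$. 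For the reverse inequality, if $\dim\widetilde H^*(X,\mathbb K)\ge n+1$ there would be an extra class $c$ not in the subalgebra generated by $\bar a$; one then produces an $(n{+}1)$-fold nonzero zero-divisor product using $c$ together with the powers of $\bar a$ (this is exactly the mechanism of case (ii)/Part (2)), giving $TC(X)\ge n+1$, a contradiction. So $\dim\widetilde H^*(X,\mathbb K)=n$ for every $\mathbb K$, and since this holds for $\mathbb Q$ and all $\mathbb F_p$, the universal coefficient theorem pins down $H_*(X,\mathbb Z)$: it is free, concentrated in degrees $r,2r,\dots,nr$, one copy of $\mathbb Z$ in each.

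\medskip

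\emph{Part (1): the CW-structure.} Knowing $H_*(X,\mathbb Z)=\mathbb Z$ in degrees $r,2r,\dots,nr$ and $0$ elsewhere, I would build a minimal CW-model by attaching cells degree by degree, exactly as in Grant et al.: start with $S^r$ (realizing $H_r$), and attach one $2r$-cell; since $\widetilde H^*(X)$ has $\bar a^2\neq0$ the attaching map $S^{2r-1}\to S^r$ must be such that the cup-square is nontrivial, and inductively attach a single $kr$-cell to kill the would-be homology in intermediate degrees and create $H_{kr}$, the attaching maps being forced (up to homotopy and up to the indeterminacy that does not affect the cell dimensions) by the requirement that the cohomology ring be the truncated polynomial algebra $\mathbb K[\bar a]/(\bar a^{n+1})$ for all $\mathbb K$. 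The resulting complex has cells only in dimensions $r,2r,\dots,nr$, and a standard obstruction-theory / Whitehead argument shows $X$ is homotopy equivalent to it. The main technical obstacle here is the same one as in [1]: showing that the minimal number of cells in each degree is exactly one and that no cells in other dimensions are needed — this requires the simple-connectivity hypothesis (to invoke Hurewicz/Whitehead) and careful bookkeeping that the integral homology, not just the $\mathbb K$-Betti numbers, is as claimed, which is why we insisted on running the dimension count over every field.

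\medskip

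\emph{Part (2).} The argument is parallel. Given $r_i$-specific generators $a_i$ with $r_1<\dots<r_n$ and $a_1\cdots a_n\neq0$: the classes $\bar a_i\otimes1-1\otimes\bar a_i$ lie in $\ker U_2(X)$, and their product $\prod_{i=1}^n(\bar a_i\otimes1-1\otimes\bar a_i)$, when expanded, contains the term $\pm\,\bar a_1\cdots\bar a_{n}\otimes 1$ (and, crucially, terms $\pm\,\bar a_{i_1}\cdots\bar a_{i_k}\otimes \bar a_{j_1}\cdots\bar a_{j_{n-k}}$ for every partition of $\{1,\dots,n\}$); the degree/specificity hypotheses prevent cancellation, so $nil(\ker U_2(X))\ge n$, consistent with $TC(X)=n$, and this forces every partial product $\bar a_{i_1}\cdots\bar a_{i_k}$ to be nonzero and $\mathbb K$-linearly independent, giving $\dim\widetilde H^*(X,\mathbb K)\ge n+1$ (the $n$ generators plus their common top product $\bar a_1\cdots\bar a_n$). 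Conversely an extra class would yield an $(n{+}1)$-fold zero-divisor product, contradicting $TC(X)=n$; so $\dim\widetilde H^*(X,\mathbb K)=n+1$ for all $\mathbb K$. Universal coefficients then give $H_*(X,\mathbb Z)$ free, one $\mathbb Z$ in each of the degrees $r_1,\dots,r_n$ and $r_1+\dots+r_n$; and the same cell-by-cell attachment argument as before produces a homotopy-equivalent CW-complex with cells exactly in those dimensions. I expect the delicate point throughout to be verifying the \emph{non-cancellation} in the binomial/multinomial expansions — this is where being a "$t$-specific generator" (unique free generator in its degree) and $r$ being minimal are used essentially — together with ruling out torsion in $H_*(X,\mathbb Z)$, which is what makes the passage from Betti numbers to an honest minimal CW-structure legitimate.
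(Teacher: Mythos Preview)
Your approach is essentially the paper's: both arguments obtain the dimension equality by combining Farber's zero-divisor cup-length bound (Proposition~\ref{nilker}) with an auxiliary class $b$ to manufacture an $(n{+}1)$-fold nonzero product in $\ker U_2(X)$, then run the universal coefficient theorem over $\mathbb{Q}$ and every $\mathbb{F}_p$ to force $H_*(X;\mathbb{Z})$ to be free of rank one in exactly the stated degrees. The only cosmetic difference is that the paper stops once the integral homology is computed and implicitly invokes the standard minimal CW-model for a simply-connected space with free homology, whereas you spell out the cell-by-cell attachment and Whitehead step --- this is additional detail, not a different strategy.
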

\begin{proof}
Let $n \geq 2$. 
\begin{enumerate}
\item Let $\mathbb{K}$ be a field and  $1_{\mathbb{K}} \in H^0(X, \mathbb{K})$  be the unit element, we have from the hypothesis $a, a^2, \dots, a^n$ are nonzero so $a \smile_1 1_{\mathbb{K}}, a^2 \smile_1 1_{\mathbb{K}}, \dots, a^n \smile_1 1_{\mathbb{K}}$ are nonzero, hence $\dim \widetilde{H}^* (X, \mathbb{K}) \geq n$. \\
Suppose now that there exist $b \in H^s(X, \mathbb{K})$ such that  $a \smile_1 1_{\mathbb{K}}, a^2 \smile_1 1_{\mathbb{K}}, \dots, a^n \smile_1 1_{\mathbb{K}}$ and $b$ are linearly independent over $\mathbb{K}$ so we can proof by induction that  $((a \smile_1 1_{\mathbb{K}}) \otimes 1_{\mathbb{K}} - 1_{\mathbb{K}} \otimes (a \smile_1 1_{\mathbb{K}}))^n (b \otimes 1_{\mathbb{K}} - 1_{\mathbb{K}} \otimes b)$ contains  $ (a^n \smile_1 1_{\mathbb{K}}) \otimes b \pm b \otimes  (a^n \smile_1 1_{\mathbb{K}}) $ which is nonzero because  $a^n \smile_1 1_{\mathbb{K}}$ and $b$ are linearly independent; and since $\mid a \mid \neq \mid b \mid $ hence $((a \smile_1 1_{\mathbb{K}}) \otimes 1_{\mathbb{K}} - 1_{\mathbb{K}} \otimes (a \smile_1 1_{\mathbb{K}}))^n (b \otimes 1_{\mathbb{K}} - 1_{\mathbb{K}} \otimes b)$ is nonzero, then from {\textit{proposition} \ref{nilker}} we have $TC(X) \geq nilker(U_2(X)) \geq n+1$ which is a contradiction.
Therefore $\dim \widetilde{H}^* (X, \mathbb{K}) = n$.\\ 
Suppose  that $\exists s \notin \{r, 2r, \dots, nr \}$ such that $H_s(X) \neq 0$ (where $s \geq 1 $).\\
 if $Free(H_s(x)) \supseteq \mathbb{Z} $ we have for $\mathbb{K} = \mathbb{Q}$ and  from the Universal Coefficient Theorem  (\textit{UCT})   $H^s(X, \mathbb{Q}) \supseteq \mathbb{Q} $ since $H^r(X, \mathbb{Q}) \neq 0$,  $H^{2r}(X, \mathbb{Q}) \neq 0$, \dots, $H^{nr}(X, \mathbb{Q}) \neq 0$  hence $\dim \widetilde{H}^* (X, \mathbb{Q}) \geq n+1$   which is a contradiction.
  else if $\mathbb{Z}/p^k\mathbb{Z} \subset T(H_s(X)) \neq 0$, where $p$ is a prime number, for $K = \mathbb{Z}/p\mathbb{Z} $ we have from the \textit{UCT}  $H^{s+1}(X, \mathbb{Z}/p\mathbb{Z}) \supseteq Ext(H_{s}(X), \mathbb{Z}/p\mathbb{Z}) \supseteq S \neq 0 $ (where $S$ is a $\mathbb{Z}/p\mathbb{Z}$-vector space ) since $H^r(X, \mathbb{Z}/p\mathbb{Z}) \neq 0$,  $H^{2r}(X, \mathbb{Z}/p\mathbb{Z}) \neq 0$, \dots,  $H^{nr}(X, \mathbb{Z}/p\mathbb{Z}) \neq 0$ hence    $\dim \widetilde{H}^* (X, \mathbb{Z}/p\mathbb{Z}) \geq n+1$ which is a contradiction.\\
  Therefor $H_s(X) = 0$ for all $s \notin \{r, 2r, \dots, nr \}$.\\
  Suppose that  $free(H_r(X))  = 0$ so from the \textit{UCT} we have $0 \neq H^r(X,\mathbb{Z}) \cong Free(H_r(X))  \oplus T(H_{r-1}(X))$ then  $\widetilde{H}_{r-1}(X) \neq 0$ which is a contradiction from the previous result.\\
  Suppose now that $H_{t}(X) = \mathbb{Z}^m \oplus T(H_{t}(X))$ for $t \in \{r, 2r, \dots, nr \}$ (where $t \geq 1$). From the \textit{UCT} we have $H^{t}(X,\mathbb{Q}) \supseteq \mathbb{Q}^m$, if $m \geq 2$ then $\dim \widetilde{H}^* (X, \mathbb{Q}) \geq n+1$ which is a contradiction. Hence   $H_{t}(X) = \mathbb{Z}^m \oplus T(H_{t}(X))$ where $m \in \{0, 1\}$. 
     Also if  $\mathbb{Z}/p^k\mathbb{Z} \subset  T(H_{t}(X)) $ so by the \textit{UCT} for $\mathbb{K} = \mathbb{Z}/p\mathbb{Z}$, we have  $H^{t+1}(X, \mathbb{Z}/p\mathbb{Z}) \supseteq \mathbb{Z}/p\mathbb{Z} \oplus T$ (where $T$ is a $\mathbb{Z}/p\mathbb{Z}$-vector space) then $\dim \widetilde{H}^* (X, \mathbb{K}) \geq n+1$ which is a contradiction. Finally $H_{t}(X) = \mathbb{Z}$ for all $t \in \{r, 2r, \dots, nr \} $.\\ 
     Then $X$ is of homotopy type of a CW-complex which  contains only cells of dimensions $r$, $2r$, \dots, and $nr$.

\item Let $\mathbb{K}$ be a field and  $1_{\mathbb{\mathbb{K}}} \in H^0(X, K)$  be the unit element, Clearly, $\dim \widetilde{H}^* (X, \mathbb{K}) \geq n+1$. Suppose now that there exist $b \in H^s(X, \mathbb{K})$ such that $a_1 \smile_1 1_{\mathbb{K}}, a_2 \smile_1 1_{\mathbb{K}}, \dots, a_n \smile_1 1_{\mathbb{K}}, (a_1 \smile_1 1_{\mathbb{K}}) (a_2 \smile_1 1_{\mathbb{K}}) \dots (a_n \smile_1 1_{\mathbb{K}})$ and $b$ are linearly independent over $K$. We can proof by induction that  the term 
$((a_1 \smile_1 1_{\mathbb{K}}) \otimes 1_{\mathbb{K}} - 1_{\mathbb{K}} \otimes (a_1 \smile_1 1_{\mathbb{K}}))((a_2 \smile_1 1_{\mathbb{K}}) \otimes 1_{\mathbb{K}} - 1_{\mathbb{K}} \otimes (a_2 \smile_1 1_{\mathbb{K}})) \dots ((a_n \smile_1 1_{\mathbb{K}}) \otimes 1_{\mathbb{K}} - 1_{\mathbb{K}} \otimes (a_n \smile_1 1_{\mathbb{K}}))(b \otimes 1_{\mathbb{K}} - 1_{\mathbb{K}} \otimes b)$ contains $(a_1 \smile_1 1_{\mathbb{K}}) (a_2 \smile_1 1_{\mathbb{K}}) \dots (a_n \smile_1 1_{\mathbb{K}}) \otimes b$ $ \pm$ $ b \otimes (a_1 \smile_1 1_{\mathbb{K}}) (a_2 \smile_1 1_{\mathbb{K}}) \dots (a_n \smile_1 1_{\mathbb{K}})$ which is nonzero because   $(a_1 \smile_1 1_{\mathbb{K}}) (a_2 \smile_1 1_{\mathbb{K}}) \dots (a_n \smile_1 1_{\mathbb{K}})$ and $b$ are linearly independents over $K$; since $\mid b \mid \not= \mid a_i \mid$ $ \forall i \in I $ so $((a_1 \smile_1 1_{\mathbb{K}}) \otimes 1_{\mathbb{K}} - 1_{\mathbb{K}} \otimes (a_1 \smile_1 1_{\mathbb{K}}))((a_2 \smile_1 1_{\mathbb{K}}) \otimes 1_{\mathbb{K}} - 1_{\mathbb{K}} \otimes (a_2 \smile_1 1_{\mathbb{K}})) \dots ((a_n \smile_1 1_{\mathbb{K}}) \otimes 1_{\mathbb{K}} - 1_{\mathbb{K}} \otimes (a_n \smile_1 1_{\mathbb{K}}))(b \otimes 1_{\mathbb{K}} - 1_{\mathbb{K}} \otimes b)$ is nonzero. Then from \textit{proposition} \ref{nilker} we have $TC(X) \geq nilker(U_2(X)) \geq n+2$ which is a contradiction.\\
Therefor   $\dim \widetilde{H}^* (X, \mathbb{K}) = n+1$.\\


Suppose now that $\exists s \notin \{r_1, r_2, \dots,  r_n, r_1 + \dots + r_n\}$ (where $s \geq 1 $) such that $H_s(X) \neq 0$. \\
If at least $H_s(X) \supseteq \mathbb{Z}$ so we have from the \textit{UCT} $H^s(X, \mathbb{Q}) \supseteq  \mathbb{Q} $, since $H^{r_i}(X, \mathbb{Q}) \neq 0 $ for $i \in I$ and $H^{r_1 + \dots + r_n}(X, \mathbb{Q}) \neq 0$  hence $\dim \widetilde{H}^* (X, \mathbb{Q}) \geq n+2 $
 which is a contradiction. \\
 Also if $T(H_s(X)) \neq 0$ so  there exist at least a non zero summand $\mathbb{Z}/p^{k}\mathbb{Z}$, where $p$ is a prime number, so from the \textit{UCT} we have $H^{s}(X, \mathbb{Z}/p\mathbb{Z}) \supseteq \mathbb{Z}/p\mathbb{Z} \oplus S  $ and $H^{s+1}(X, \mathbb{Z}/p\mathbb{Z}) \supseteq \mathbb{Z}/p\mathbb{Z} \oplus T  $ (where $S$ and $T$ are two $\mathbb{Z}/p\mathbb{Z}$-vector spaces), since   $H^{r_i}(X, \mathbb{Z}/p\mathbb{Z}) \neq 0$  for $i \in I$ and $H^{r_1 + \dots + r_n}(X, \mathbb{Z}/p\mathbb{Z}) \neq 0$  then $\dim \widetilde{H}^* (X, \mathbb{Z}/\mathbb{Z}) \geq n+3 $ which is also  a contradiction.\\
 So for all integer $s \notin \{r_1, \dots, r_n, r_1 + \dots + r_n\}$, $H_s(X)$ is trivial.\\

 As in the proof of the first part in the theorem and from the result above  we have $Free(H_r(X)) \neq 0$. \\
 Suppose that $ \exists t \in \{ r_1, \dots, r_n, r_1 + \dots + r_n \} $ such that  $H_t(X) \cong \mathbb{Z}^n \oplus T(H_t(X))$ (where $t \geq 1 $). If $T(H_t(X))$  has at least a summand $\mathbb{Z}/p^k\mathbb{Z}$ and $n \geq 1$ so by the  \textit{UCT} we have  $H^t(X, \mathbb{Z}/p\mathbb{Z}) \supseteq \mathbb{Z}/p\mathbb{Z} \oplus \mathbb{Z}/p\mathbb{Z} \oplus S$, where $S$ is a $\mathbb{Z}/p\mathbb{Z}$-vector space, hence $\dim \widetilde{H}^* (X, \mathbb{Z}/p\mathbb{Z}) \geq n+2$ which is a contradiction  then $n = 0$ or $T(H_t(X)) = 0$. 
 Suppose now  that  $T(H_t(X))$  has at least a summand $\mathbb{Z}/p^k\mathbb{Z}$ hence $n=0$, since $H^t(X,\mathbb{Z}) = Free(H_t(X)) \oplus T(H_{t-1}(X))$ so $T(H_{t-1}(X)) \neq 0$ if  $t-1 \notin \{ r_1, \dots, r_n, r_1 + \dots + r_n \}$ ) it's a contradiction  else (i.e.
  $t-1 \in \{ r_1, \dots, r_n, r_1 + \dots + r_n \}$) we proceed in the same manner and we will have $T(H_{t-2}(X)) \neq 0$ we continue the process till to get an element not in $\{ r_1, \dots, r_n, r_1 + \dots + r_n \}$ which is a contradiction. \\
 Therefor $n = 1$ and $T(H_t(X)) = 0$ i.e. $H_t(X) \cong \mathbb{Z}$,  $\forall t \in \{ r_1, \dots, r_n, r_1 + \dots + r_n \} $.\\
 Then   $X$  is of homotopy type of a CW-complex which  contains only cells of dimensions $r_1$, $r_2$, \dots, $r_n$, and $r_1 + r_2 + \dots + r_n$ \\

\end{enumerate}

\end{proof}

\begin{remark}
\begin{enumerate}

\item The results of the previous theorem holds true if   $\mid a_1 \mid =  \dots  = \mid a_n \mid = r$ and $a_1, \dots, a_n$ are the unique generators of degree $r$. 

\item Since the cup product of the wedge of spheres  is  trivial,  the space  $X$ which verify the hypothesis of the  \textit{theorem 5} won't  necessary be wedge of spheres. In our next work, we will focus on determining  homotopy types of attaching maps in minimal structures for various cases cited in the previous theorem 
\end{enumerate}

\end{remark}

 

Applying \textit{theorem} \ref{thm fon} in the case of topological complexity two, we have immediately the :
\begin{corollary}
Suppose that $TC(X)  = 2$ and  there exist a  $r$-specific generator $a \in H^r(X, \mathbb{Z})$ such that $a^2 \neq 0$  hence $\dim \widetilde{H}^* (X, K) = 2$ for all field  $K$.
\end{corollary}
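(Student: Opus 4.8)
The plan is to recognize this statement as nothing more than the case $n=2$ of Theorem~\ref{thm fon}(1). The hypotheses of the corollary — that $TC(X)=2$ and that there is an $r$-specific generator $a\in H^r(X,\mathbb{Z})$ with $a^2\neq 0$ — are exactly the hypotheses of Theorem~\ref{thm fon}(1) with $n=2$, since there $a^n=a^2\neq 0$; and the conclusion $\dim\widetilde{H}^*(X,\mathbb{K})=2$ is precisely the conclusion $\dim\widetilde{H}^*(X,\mathbb{K})=n$ in that case. So the proof is one line: apply Theorem~\ref{thm fon}(1) with $n=2$.

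For the reader's convenience I would also spell out why the argument specializes cleanly. For the lower bound, since $a$ and $a^2$ are nonzero in $H^*(X,\mathbb{Z})$, the classes $a\smile_1 1_{\mathbb{K}}\in H^r(X,\mathbb{K})$ and $a^2\smile_1 1_{\mathbb{K}}\in H^{2r}(X,\mathbb{K})$ are nonzero and lie in the two distinct positive degrees $r$ and $2r$, whence $\dim\widetilde{H}^*(X,\mathbb{K})\geq 2$. For the upper bound, suppose toward a contradiction that there were a further class $b\in H^s(X,\mathbb{K})$ with $\bar a:=a\smile_1 1_{\mathbb{K}}$, $a^2\smile_1 1_{\mathbb{K}}$ and $b$ linearly independent over $\mathbb{K}$. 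Then, exactly as in the proof of Theorem~\ref{thm fon}, the zero-divisor
\[
(\bar a\otimes 1_{\mathbb{K}}-1_{\mathbb{K}}\otimes\bar a)^2\,(b\otimes 1_{\mathbb{K}}-1_{\mathbb{K}}\otimes b)
\]
contains the term $(a^2\smile_1 1_{\mathbb{K}})\otimes b\pm b\otimes(a^2\smile_1 1_{\mathbb{K}})$, which is nonzero by linear independence and cannot be cancelled by the remaining terms since $|b|\neq|a^2|$. Hence $nil(\ker U_2(X))\geq 3$, so by Proposition~\ref{nilker} $TC(X)\geq 3$, contradicting $TC(X)=2$. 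Therefore $\dim\widetilde{H}^*(X,\mathbb{K})=2$ for every field $\mathbb{K}$.

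The only point requiring care — the same one already handled inside Theorem~\ref{thm fon} — is checking that the displayed monomial genuinely appears with nonzero coefficient in the expanded product and is not annihilated by a coincidence of degrees; this is a bookkeeping computation with the graded-commutativity signs, and it is the hypothesis that $a$ is the \emph{unique} free generator in degree $r$ (so that $\bar a$ controls $H^r(X,\mathbb{K})$), together with $|b|\notin\{0,r,2r\}$, that makes it go through. Since the corollary merely invokes the already-proved theorem, there is no additional obstacle here.
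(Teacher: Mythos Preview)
Your proposal is correct and matches the paper's approach exactly: the paper simply states that Corollary~7 follows immediately from Theorem~\ref{thm fon} in the case $n=2$, which is precisely your one-line argument. Your additional unpacking of the lower and upper bounds merely reproduces the $n=2$ instance of the proof of Theorem~\ref{thm fon}(1) already given in the paper, so there is nothing further to add.
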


\begin{corollary}
With the same hypothesis as in \textit{corollary 7},   we have 
  $H_r(X) = \mathbb{Z}$, $H_{2r}(X) = \mathbb{Z}$ and $H_s(X) = 0$ for all $s \notin \{  r, 2r \}$,  Furthermore  $X$ is of homotopy type of a CW-complex which  contains only cells of dimensions $r$ and $2r$
 
\end{corollary}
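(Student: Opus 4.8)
The statement is the specialization of Theorem \ref{thm fon}(1) to $n=2$, combined with the conclusion of \textit{corollary 7}, so the plan is to extract exactly the relevant portion of that argument. First I would recall from \textit{corollary 7} that the hypotheses $TC(X)=2$ and $a^2\neq 0$ for an $r$-specific generator $a\in H^r(X,\mathbb{Z})$ already force $\dim\widetilde{H}^*(X,\mathbb{K})=2$ for every field $\mathbb{K}$; this is the only analytic input needed, the rest being homological bookkeeping via the universal coefficient theorem.

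Next, suppose $H_s(X)\neq 0$ for some integer $s\geq 1$ with $s\notin\{r,2r\}$. If $\mathrm{Free}(H_s(X))\neq 0$, then by the UCT over $\mathbb{Q}$ the group $H^s(X,\mathbb{Q})$ is nonzero, which together with the nonzero classes in degrees $r$ and $2r$ furnished by $a\smile_1 1_{\mathbb{Q}}$ and $a^2\smile_1 1_{\mathbb{Q}}$ gives $\dim\widetilde{H}^*(X,\mathbb{Q})\geq 3$, a contradiction. If instead $T(H_s(X))$ contains a summand $\mathbb{Z}/p^k\mathbb{Z}$, then by the UCT over $\mathbb{Z}/p\mathbb{Z}$ there is a nonzero class in $H^{s+1}(X,\mathbb{Z}/p\mathbb{Z})$, again in addition to the two classes in degrees $r$ and $2r$, so $\dim\widetilde{H}^*(X,\mathbb{Z}/p\mathbb{Z})\geq 3$, a contradiction. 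Hence $H_s(X)=0$ for all $s\notin\{r,2r\}$. A short further argument pins down the two surviving groups: one has $\mathrm{Free}(H_r(X))\neq 0$, since otherwise the UCT would identify $H^r(X,\mathbb{Z})$ with $T(H_{r-1}(X))$, forcing $\widetilde{H}_{r-1}(X)\neq 0$, which was just excluded; a free part of rank $\geq 2$ in $H_r(X)$ or $H_{2r}(X)$ would give $\dim\widetilde{H}^*(X,\mathbb{Q})\geq 3$; and a torsion summand $\mathbb{Z}/p^k\mathbb{Z}$ in $H_r(X)$ or $H_{2r}(X)$ would produce an extra $\mathbb{Z}/p\mathbb{Z}$-class in $H^{*+1}(X,\mathbb{Z}/p\mathbb{Z})$. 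Thus $H_r(X)\cong H_{2r}(X)\cong\mathbb{Z}$ and $H_s(X)=0$ otherwise.

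Finally, since the integral homology of $X$ is free and concentrated in degrees $r$ and $2r$, a minimal CW model argument — constructing cells degree by degree to realize the homology generators, with the absence of torsion meaning no relation cells are needed — shows $X$ is homotopy equivalent to a CW-complex with a single $r$-cell and a single $2r$-cell (together with the base point), which is precisely the ``furthermore'' clause and is exactly the conclusion reached at the end of the proof of Theorem \ref{thm fon}(1). I do not anticipate any genuine obstacle: the whole substance is already contained in Theorem \ref{thm fon}, and the only mildly delicate point is the passage from free homology to a minimal cell structure, which relies on the Hurewicz and Whitehead theorems under the standing connectivity and finite-type assumptions on $X$.
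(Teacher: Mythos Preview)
Your proposal is correct and follows essentially the same approach as the paper: the corollary is stated there as an immediate consequence of Theorem~\ref{thm fon}(1) with $n=2$, and you have faithfully extracted the relevant portion of that argument (UCT over $\mathbb{Q}$ and $\mathbb{Z}/p\mathbb{Z}$ to rule out extraneous homology, then the passage to a minimal cell structure). No differences worth noting.
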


\begin{corollary}
  Suppose that $TC(X) = 2$ and there exist a $r$-specific generator $a \in H^r(X, \mathbb{Z})$, and a $s$-specific generator $b \in H^s(X, \mathbb{Z})$ (where $r < s$) 
   such that  
 $a b \neq 0$;  therefore  $\dim \widetilde{H}^* (X, K) = 3$ for all field $K$.
 \end{corollary}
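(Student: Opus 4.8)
The plan is to read off this statement as the case $n = 2$ of Theorem~\ref{thm fon}(2). Setting $a_1 := a$ and $a_2 := b$, the assumption $r < s$ makes $\{a_1, a_2\}$ a set of $r_i$-specific generators with $r_1 = r < r_2 = s$, and the hypothesis $ab \neq 0$ is precisely $a_1 a_2 \neq 0$; Theorem~\ref{thm fon}(2) then delivers $\dim \widetilde{H}^*(X,\mathbb{K}) = n+1 = 3$ for every field $\mathbb{K}$. So, strictly, the only thing to verify is that the hypotheses of the corollary form a verbatim instance of those of the theorem. Still, I would also record the short self-contained argument, since only the case $n = 2$ is needed here.

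For the lower bound, fix a field $\mathbb{K}$ and let $1_{\mathbb{K}} \in H^0(X,\mathbb{K})$ be the unit. The three classes $a \smile_1 1_{\mathbb{K}} \in H^r(X,\mathbb{K})$, $b \smile_1 1_{\mathbb{K}} \in H^s(X,\mathbb{K})$ and $(ab) \smile_1 1_{\mathbb{K}} = (a \smile_1 1_{\mathbb{K}})(b \smile_1 1_{\mathbb{K}}) \in H^{r+s}(X,\mathbb{K})$ are each nonzero --- reducing the nonzero integral classes $a$, $b$, $ab$ modulo the characteristic keeps them nonzero because $a$ and $b$ are specific generators lying in the free part of integral cohomology --- and they sit in the three distinct degrees $r$, $s$, $r+s$, hence are linearly independent over $\mathbb{K}$. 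Thus $\dim \widetilde{H}^*(X,\mathbb{K}) \ge 3$.

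For the upper bound I argue by contradiction. Suppose there is $c \in H^t(X,\mathbb{K})$ linearly independent from the three classes above, and form the zero-divisors $\bar a = (a \smile_1 1_{\mathbb{K}}) \otimes 1_{\mathbb{K}} - 1_{\mathbb{K}} \otimes (a \smile_1 1_{\mathbb{K}})$, and similarly $\bar b$ and $\bar c$, all lying in $\ker U_2(X)$. Exactly as in the proof of Theorem~\ref{thm fon}, an induction on the number of factors shows that $\bar a\,\bar b\,\bar c$ contains the term $(ab \smile_1 1_{\mathbb{K}}) \otimes c \pm c \otimes (ab \smile_1 1_{\mathbb{K}})$, which is nonzero because $ab \smile_1 1_{\mathbb{K}}$ and $c$ are linearly independent; hence $\bar a\,\bar b\,\bar c \neq 0$, so $nil(\ker U_2(X)) \ge 3$, and Proposition~\ref{nilker} forces $TC(X) \ge 3$, contradicting $TC(X) = 2$. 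Therefore no such $c$ exists and $\dim \widetilde{H}^*(X,\mathbb{K}) = 3$.

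The step deserving real care --- the main obstacle --- is the nonvanishing of $\bar a\,\bar b\,\bar c$: one must expand this product in $H^*(X,\mathbb{K}) \otimes H^*(X,\mathbb{K})$, make sure the distinguished cross term $(ab \smile_1 1_{\mathbb{K}}) \otimes c \pm c \otimes (ab \smile_1 1_{\mathbb{K}})$ is not cancelled by another term of the expansion, and confirm $ab \smile_1 1_{\mathbb{K}} \neq 0$. This all comes down to the degree separation $r < s < r+s$ and $t \notin \{r, s, r+s\}$ (together with the fact that $a$, $b$ reduce to nonzero classes mod each prime), which is exactly the bookkeeping performed in the inductive step of Theorem~\ref{thm fon}; everything else is routine.
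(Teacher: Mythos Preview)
Your proposal is correct and matches the paper's approach exactly: the paper derives this corollary simply by invoking Theorem~\ref{thm fon}(2) with $n=2$, which is precisely your first paragraph. The additional self-contained argument you record is just the $n=2$ instance of the paper's proof of Theorem~\ref{thm fon}(2), so it adds detail but not a different method.
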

 The previous corollary holds  true if $ \mid a \mid = \mid b \mid = r $ and are the unique generators of degree $r$. \\
 
 As an example we take the case of the Tore :

\begin{example}
If $X = T^2$. we have $TC(T^2) = 2$ and there exist two generators  $a$ and $b$ which they  generate $ H^1(X,\mathbb{Z}) \cong \mathbb{Z} \oplus \mathbb{Z}$ such that $ab$ generates $H^2(X,\mathbb{Z}) \cong \mathbb{Z}$, so  $\dim \widetilde{H}^* (X, K) = 3$ for all field $K$. 
\end{example}

\begin{corollary}
with the same properties as in  \textit{corollary 9}, we have $X$ is of homotopy type of a CW-complex which  contains only cells of dimensions $r$, $s$ and $r+s$
  
\end{corollary}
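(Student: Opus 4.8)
The statement follows at once from Theorem \ref{thm fon}, part (2), applied with $n = 2$. Indeed, take $a_1 := a$ and $a_2 := b$: by the hypotheses inherited from Corollary 9 (in particular $TC(X) = 2$) these are an $r$-specific and an $s$-specific generator with $r_1 := r < r_2 := s$, and $a_1 a_2 = ab \neq 0$. Theorem \ref{thm fon}(2) then yields both that $\dim \widetilde{H}^*(X,K) = n+1 = 3$ for every field $K$ (this is Corollary 9) and that $X$ is of the homotopy type of a CW-complex whose cells lie only in dimensions $r_1 = r$, $r_2 = s$ and $r_1 + r_2 = r+s$, which is precisely the claim. So the whole proof is: specialize Theorem \ref{thm fon}(2) to $n=2$ with $(a_1,a_2) = (a,b)$.

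Since the first half ($\dim \widetilde{H}^* = 3$) is already Corollary 9, the content of Corollary 11 is the description of the cells, so let me recall how that part of Theorem \ref{thm fon}(2) runs in this instance. The engine is the Universal Coefficient Theorem together with the bound $\dim \widetilde{H}^*(X,K) = 3$: if some $H_t(X)$ with $t \notin \{r, s, r+s\}$ were nonzero, then its free part would give an extra class in $H^t(X,\mathbb{Q})$, beyond $a$, $b$, $ab$, and its torsion part a spurious $\mathrm{Ext}$-class in $H^{t+1}(X,\mathbb{Z}/p\mathbb{Z})$ for a suitable prime $p$, either way contradicting $\dim \widetilde{H}^* = 3$. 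The same UCT count carried out in the three surviving degrees — the free ranks being bounded below by the linear independence of $a$, $b$, $ab$, the absence of torsion coming from the $\mathrm{Ext}$-term together with the descent through $H^t(X,\mathbb{Z}) \cong \mathrm{Free}(H_t(X)) \oplus T(H_{t-1}(X))$ — forces $H_r(X) \cong H_s(X) \cong H_{r+s}(X) \cong \mathbb{Z}$ and $H_t(X) = 0$ otherwise.

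Finally, with $H_*(X;\mathbb{Z})$ free and concentrated in degrees $r$, $s$, $r+s$, one invokes the existence of a minimal CW-structure on a simply connected space of finite type: such a space is homotopy equivalent to a CW-complex with exactly one $t$-cell for each member of a basis of $H_t(X;\mathbb{Z})$ and, the homology being torsion-free here, no other cells. That produces a model with a single cell in each of the dimensions $r$, $s$, $r+s$, as asserted. The one step that is not sheer bookkeeping is precisely this last appeal to minimal cell structures, which rests on the standing hypothesis that $X$ is simply connected and of finite type; everything else is the proof of Theorem \ref{thm fon}(2) reread with $n = 2$.
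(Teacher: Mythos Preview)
Your proposal is correct and matches the paper's approach exactly: the paper derives Corollary 11 (like Corollaries 7--9) simply by specializing Theorem \ref{thm fon}(2) to $n=2$, with no separate argument given. Your additional recap of the UCT counting and the passage to a minimal CW-structure just makes explicit what the proof of Theorem \ref{thm fon}(2) already contains.
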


\vspace*{1cm}

\textit{Acknowledgment : we are indebted  to members of Moroccan Area of Algebraic Topology group for very useful conversations  which occurred at its  monthly meetings, also we are grateful to Mr.M.Grant for his precious remarks.}

\end{document}